\newtheorem{theorem}{Theorem}
\theoremstyle{remark}
\begin{document}
%
\title{Optimal control of storage for arbitrage,\\ with applications to
  energy systems}

\author{
\IEEEauthorblockN{J.R. Cruise}
\IEEEauthorblockA{Actuarial Mathematics and Statistics\\Heriot-Watt University\\Edinburgh, UK\\
Email: r.cruise@hw.ac.uk}
\and
\IEEEauthorblockN{R.J. Gibbens}
\IEEEauthorblockA{Computer Laboratory\\University of Cambridge\\
Cambridge, UK\\
Email: richard.gibbens@cl.cam.ac.uk}
\and
\IEEEauthorblockN{S. Zachary}
\IEEEauthorblockA{Actuarial Mathematics and Statistics\\Heriot-Watt University\\
Edinburgh, UK\\
Email: s.zachary@hw.ac.uk}
}


%


\maketitle

\begin{abstract}
  We study the optimal control of storage which is used for arbitrage,
  i.e.\ for buying a commodity when it is cheap and selling it when it
  is expensive.  Our particular concern is with the management of
  energy systems, although the results are generally applicable.  We
  consider a model which may account for nonlinear cost functions,
  market impact, input and output rate constraints and
  inefficiencies or losses in the storage process.  We develop an
  algorithm which is maximally efficient in the sense that it
  incorporates the result that, at each point in time, the optimal
  management decision depends only a finite, and typically short, time
  horizon.  We give examples related to the management of a real-world
  system.
\end{abstract}


\section{Introduction}
\label{sec:introduction}

How should one optimally control a store which is used to make money
by buying a commodity when it is cheap, and selling it when it is
expensive?  We are interested in this question primarily in the
context of electrical energy systems, where a store may, for example,
take in energy at night, when there may be a surplus of supply over
demand rendering the excess energy cheap, and release that energy
during the day.  However, the mathematics we develop is of course more
generally applicable.

A major constraint on the operation of electrical energy systems---for
example, the UK national grid or similar systems in other countries or
continents---is that supply and demand need to be kept in very close
balance at all times.  It has always been the case that electricity
demand is highly variable, notably on daily, weekly and annual cycles,
although this variation is in general at least predictable.  However,
the increasing reliance on renewable sources of generation such as
wind and solar power is now introducing both variability and
unpredictability in electricity supplies.  In order to assist in
keeping supply and demand well balanced it is useful to be able to
shift electrical energy through time.  The most obvious way to do this
is through \emph{storage}, which rearranges the profile in time of
energy \emph{supply}.  However, the profile in time of \emph{demand}
may also be rearranged through what is generally referred to as
\emph{demand-side management}, and it should be noted that the
postponement of demand is mathematically equivalent to the use of
negative storage---although the practical difficulties with
demand-side management are somewhat different.

Storage may assist in a large number of ways, most notably:
\begin{compactenum}[(i)]
\item in shifting energy from times of low demand, when its
  generation is typically cheap, to times of high demand, when its
  generation is typically expensive;
\item in stabilising the system with respect to small and transient
  imbalances;
\item in reacting to major disturbances, such as sudden loss of
  generation, transmission failures, or sudden surges in demand.
\end{compactenum}

Our interest here is primarily in the first of these---see, for
example, \cite{BGK,GTL,HMN,WW} for a broader discussion of storage.
We take an economic view, and investigate the value of energy storage
for \emph{arbitrage}, that is smoothing price fluctuations over time.
Thus it is assumed that energy is always available from somewhere, at
a sufficiently high price, and that the value of storage consists of
its ability to buy energy when it is cheap and release it when it is
expensive.  We work here in a deterministic setting in which we assume
that all relevant buying and selling prices are known in advance.

We think of the available storage as a single store.  Its \emph{value}
is equal to the profit which can be made by a notional store ``owner''
buying and selling as above.  In the case where the activities of the
store are sufficiently significant as to have a market impact (the
store becomes a ``price maker''), the \emph{system} or \emph{societal}
value of the store may be similarly calculated by adjusting the
notional buying and selling prices so that the store ``owner'' is
required to bear also the external costs of the store's activities.
Thus our framework is in this respect completely general.  We also
allow for nonlinear cost functions, for differences in  buying and
selling prices, for inefficiencies in the storage process, and for
input and output rate constraints.

In Section~\ref{sec:problem} we formally define the relevant
mathematical problem and characterise mathematically its optimal
solution.  In Section~\ref{sec:algorithm} we provide an algorithm for
its solution, which is efficient in the sense we there explain.  In
particular the decisions to be made at each point in time typically
depend only on a very short future horizon---which is identifiable,
but not determined in advance.  In Section~\ref{sec:example} we give
examples based on real data and a real pumped storage facility.
Finally, in Section~\ref{sec:conclusions} we outline some extensions
and give concluding remarks.


\section{Problem formulation and characterisation of solution}
\label{sec:problem}

We work in discrete time, which we take to be integer.  We assume that
the store has total \emph{capacity} of $E$ units of energy, and input
and output \emph{rate constraints} of $P_i$ and $P_o$ units of power
respectively.  We consider also a time-independent
(in)efficiency~$\eta$ associated with the store.  This may be defined
as the fraction of energy input which is available for output.  It may
be captured in our model either by adjusting buy prices by a factor
$1/\eta$ or by multiplying sell prices by $\eta$ (the values of $E$ in
the two cases differing by a factor of $\eta$).  Hence, without loss
of generality, we take $\eta=1$ throughout in our mathematical
formulation of the problem and its solution.  (A further type of
(in)efficiency, which may be regarded as \emph{leakage} over time is
considered in Section~\ref{sec:conclusions}.)

Both buying
and selling prices at time $t$ may conveniently be represented by a
function $C_t(x)$ with $C_t(0)=0$, which is increasing and convex in
$x$ and which, for positive $x$, is the price of buying $x$ units of
energy, and, for negative $x$, is the negative of the price for
selling $-x$ units of energy.  Thus the cost of increasing the level
of energy in the store by $x$, positive or negative, is always
$C_t(x)$.  The convexity assumption corresponds, for each time $t$, to
an increasing cost to the store of buying each additional unit of
energy, a decreasing revenue obtained for selling each additional unit
of energy, and every unit buying price being at least as great as
every unit selling price.

As indicated above, if the problem is to determine the value of the
store to the entire energy system, or to society, then these prices
are taken to be those appropriate to the system or societal costs.
Thus, for example, for $x$ positive, $C_t(x)$ is the price paid by the
store at time $t$ for $x$ units of energy plus the increased cost paid
by other energy users at that time as a result of the store's purchase
increasing market prices.

A special case is that of a ``small'' store, whose operations do not
influence the market (the store is a ``price-taker'' rather than a
``price-maker''), and which at time $t$ buys and sells energy at given
prices per unit of $c^{(b)}_t$ and $c^{(s)}_t$ respectively, where we assume
that $c^{(b)}_t\ge c^{(s)}_t$.  Here the function $C_t(x)$ is given by
\begin{equation}
  \label{eq:1}
  C_t(x) =
  \begin{cases}
    c^{(b)}_t x & \quad\text{if $x\ge0$}\\
    c^{(s)}_t x & \quad\text{if $x<0$}.
  \end{cases}
\end{equation}

Finally, we assume
that all prices are known in
advance, so that the problem of controlling the store is
deterministic.

Consider the problem of controlling the store so as to maximise profit
over a time interval $[0,T]$.  Note that  the rate constraints may, if
we choose, be absorbed into the cost function---for example, by
defining, for each $t$
\begin{displaymath}
  C_t(x) =
  \begin{cases}
    C_t(-P_o) & \text{for $x\le-P_o$}\\
    \infty & \text{for $x>P_i$}
  \end{cases}
\end{displaymath}
(formally this defines an extension of the range of the cost functions
to include the point at $\infty$ but it is readily verified that this
causes no problems).  For simplicity in the presentation of the theory
below we assume this to have been done; thus see
Figure~\ref{fig:cost_fn} for an illustration of a typical cost function.
\begin{figure}[!t]
\centering
\begin{tikzpicture}[scale=0.75]
  \draw[->,>=stealth'] (-5,0) -- (5,0) node [right] {$x$};
  \draw[->,>=stealth'] (0,-1) -- (0,5) node [left] {$C_t(x)$};
  
  \draw[very thick] (-5,-1) -- (-4,-1) .. controls (-3,-1) .. (0,0) .. controls (4,3) .. (4,4) -- (4,5);
  \draw[dashed, thick] (-4,0) node [above] {$-P_0$} -- (-4,-1);
  \draw[dashed, thick] (4,0) node [below] {$P_i$} -- (4,4);
  \draw (-2,-1) node [below] {sell};
  \draw (3,0.75) node [above] {buy};
 \end{tikzpicture}
\caption{Illustrative cost function $C_t$, incorporating also rate constraints.}
\label{fig:cost_fn}
\end{figure}
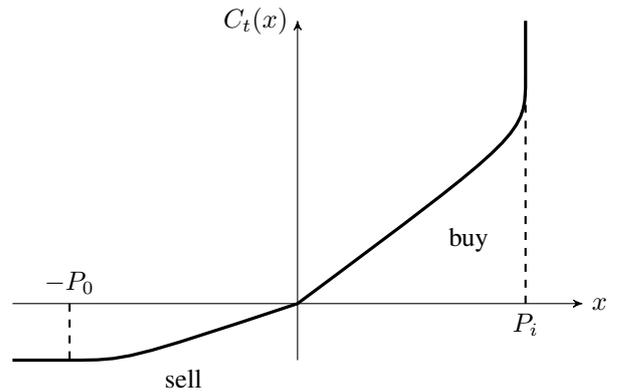

Denote the successive levels of the store by a vector
$S=(S_0,\dots,S_T)$ where $S_t$ is the level of the store at each
successive time $t$.  Define also the vector
$x(S)=(x_1(S),\dots,x_T(S))$ by $x_t(S)=S_t-S_{t-1}$ for each
$t\ge1$,
so that $x_t(S)$ represents the
energy added to the store at time~$t$.  It is convenient to assume
that both the initial level $S_0$ and the final level $S_T$ of the
store are fixed in advance at $S_0=S^*_0$ and $S_T=S^*_T$.  (If the
final level $S_T$ is not fixed and the cost function $C_T$ is strictly
increasing, then, for an optimal control, we may take $S_T$ to be
minimised; however, we might, for example, require $S_T=S_0$---as a
contribution to a toroidal solution.)


The problem thus becomes:
\begin{compactitem}
\item[$\mathbf{P}$:]
  choose $S$ so as to minimise
  \begin{equation}
    \label{eq:2}
    G(S) := \sum_{t=1}^T C_t(x_t(S))
  \end{equation}
  with $S_0=S^*_0$ and $S_T=S^*_T$, and subject to the capacity
  constraints
  \begin{equation}
    \label{eq:3}
    0 \le S_t\le E,
    \qquad 1 \le t \le T-1.
  \end{equation}
\end{compactitem}

In the case where the cost functions $C_t$ are linear, or piecewise
linear, as in the ``small store'' case given by \eqref{eq:1}, and in
which rate constraints may be present, the problem~$\mathbf{P}$ may be
reformulated as a linear programming problem, and solved by, for
example, the use of the minimum cost circulation algorithm (see, for
example, \cite{Boyd,AMO}).  Our aim in the present paper is to deal with
the general case, and to develop an algorithm which proceeds locally
in time, providing a solution which is efficient both in the general
and in the linear case.


In Theorem~\ref{thm:ls} below we use strong Lagrangian
theory~\cite{Boyd,Whi} to give sufficient conditions for a value $S^*$
of $S$ to solve the problem~$\mathbf{P}$.  We then discuss briefly how
Lagrangian theory may be used to show that a solution of the form
given always exists.  The truth of this latter assertion is also
demonstrated in Section~\ref{sec:algorithm} when we consider an
algorithm for the determination of the solution.

\begin{theorem}
  \label{thm:ls}
  Suppose that there exists a
  vector $\mu^*=(\mu^*_1,\dots,\mu^*_T)$ and a value
  $S^*=(S^*_0,\dots,S^*_T)$ of $S$ such that
  \begin{compactenum}[(i)]
  \item $S^*$ is feasible for the stated problem,
  \item for each $t$ with $1\le t\le T$, $x_t(S^*)$ minimises
    $C_t(x)-\mu^*_tx$ over all $x$,
  \item the pair $(S^*,\mu^*)$ satisfies the complementary slackness
    conditions, for $1\le t\le T-1$,
    \begin{equation}
      \label{eq:4}
      \begin{cases}
        \mu^*_{t+1} = \mu^*_t & \quad\text{if $0 < S^*_t < E$,}\\
        \mu^*_{t+1} \le \mu^*_t & \quad\text{if $S^*_t = 0$,}\\
        \mu^*_{t+1} \ge \mu^*_t & \quad\text{if $S^*_t = E$.}
      \end{cases}
    \end{equation}
  \end{compactenum}
  Then $S^*$ solves the stated problem~$\mathbf{P}$.
\end{theorem}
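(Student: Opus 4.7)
The plan is to establish weak duality directly and show that for any feasible $S$, $G(S) \ge G(S^*)$. First I would invoke condition (ii) pointwise: since $x_t(S^*)$ minimises $C_t(x) - \mu^*_t x$ over all $x$, we have $C_t(x_t(S)) - \mu^*_t x_t(S) \ge C_t(x_t(S^*)) - \mu^*_t x_t(S^*)$ for each $t$. Summing over $t$ yields
\[
G(S) - G(S^*) \ge \sum_{t=1}^T \mu^*_t\bigl(x_t(S) - x_t(S^*)\bigr).
\]

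Next I would apply summation by parts, writing $x_t(S) - x_t(S^*) = \delta_t - \delta_{t-1}$ where $\delta_t := S_t - S^*_t$. The boundary conditions $S_0 = S^*_0$ and $S_T = S^*_T$ force $\delta_0 = \delta_T = 0$, so Abel summation kills the endpoint contributions and leaves
\[
\sum_{t=1}^T \mu^*_t(\delta_t - \delta_{t-1}) = \sum_{t=1}^{T-1} (\mu^*_t - \mu^*_{t+1})\,\delta_t.
\]

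Finally I would use the complementary slackness conditions (iii), combined with the feasibility of $S$, to conclude that each term in this sum is non-negative. The three cases split cleanly: if $0 < S^*_t < E$ the coefficient $\mu^*_t - \mu^*_{t+1}$ vanishes; if $S^*_t = 0$ then $\delta_t = S_t \ge 0$ while $\mu^*_t - \mu^*_{t+1} \ge 0$; and if $S^*_t = E$ then $\delta_t = S_t - E \le 0$ while $\mu^*_t - \mu^*_{t+1} \le 0$. In every case the product is non-negative, hence $G(S) \ge G(S^*)$.

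I do not expect a substantial obstacle here: this is the standard Lagrangian sufficiency template for a convex problem, and conditions (ii) and (iii) are exactly calibrated to make the argument go through. The only point demanding genuine care is pairing the sign of $\delta_t$ (dictated by whichever capacity constraint is active at $S^*_t$) with the correct inequality in (iii)---precisely the pairing that complementary slackness is designed to deliver.
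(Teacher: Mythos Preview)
Your proposal is correct and follows essentially the same route as the paper's proof: apply condition~(ii) termwise, sum, perform Abel summation using the fixed endpoints, and then invoke condition~(iii) together with feasibility of $S$ to sign each remaining term. The only cosmetic difference is that the paper works with $S^*_t - S_t$ and shows the resulting sum is $\le 0$, whereas you work with $\delta_t = S_t - S^*_t$ and show it is $\ge 0$; your case analysis at the end is also spelled out more explicitly than the paper's one-line appeal to~(iii).
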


\begin{proof}
  Let $S$ be any vector which is feasible for the problem (with
  $S_0=S^*_0$ and $S_T=S^*_T$).  Then, from
  the condition~(ii),
  \begin{displaymath}
    \sum_{t=1}^T\left[C_t(x_t(S^*)) - \mu^*_t x_t(S^*)\right]
    \le
    \sum_{t=1}^T\left[C_t(x_t(S)) - \mu^*_t x_t(S)\right].
  \end{displaymath}
  Rearranging and recalling that $S$ and $S^*$ agree at $0$ and at
  $T$, we have
  \begin{equation}
  \begin{split}
   &\sum_{t=1}^T C_t(x_t(S^*)) -  \sum_{t=1}^T C_t(x_t(S)) \\
    & \qquad \le \sum_{t=1}^T \mu^*_t (S^*_t -  S^*_{t-1} - S_t +
     S_{t-1})\\
    & \qquad = \sum_{t=1}^{T-1}(S^*_t - S_t)(\mu^*_t - \mu^*_{t+1})\\
   & \qquad \le 0,
  \end{split}
\end{equation}
  by the condition~(iii), so that the result follows.  
\end{proof}

As previously discussed, Theorem~\ref{thm:ls} gives a sufficient
condition for a pair~$(S^*,\mu^*)$ to solve the problem~$\mathbf{P}$.
As is clear from the condition~(ii) of the theorem, the vector $\mu^*$
has the interpretation that, at each time~$t$, the quantity $\mu^*_t$
may be regarded as a notional value per unit of energy in store, and
may be used to determine how much further energy to buy or sell at
that time.  An optimal solution to the problem~$\mathbf{P}$ is given
by keeping this reference value $\mu^*_t$ as constant as possible over
time (for otherwise a ``solution'' may be improved by using a more
consistent value of the vector~$\mu^*$); the exceptions occur at the
boundaries~$0$ and $C$ of the capacity constraint region, where the
above improvements may not be possible and where $\mu_t$ is allowed to
decrease immediately subsequent to those times when the store is empty
and to increase immediately subsequent to those times when it is full.

An examination of the relevant strong Lagrangian theory (again see
\cite{Boyd,Whi}) shows that the vector $\mu^*$ has a representation as
\begin{equation*}
  \mu^*_t=\sum_{u=t}^T(\alpha^*_u+\beta^*_u),
  \qquad 1 \le t \le T,
\end{equation*}
where each $\alpha^*_t$ and $\beta^*_t$ are (strong) Lagrange
multipliers associated with respectively the lower bound $0$ and upper
bound~$E$ of the capacity constraint at time $t$.  The standard
convexity condition of the supporting hyperplane theorem shows that
here a sufficient condition for the existence of such Lagrange
multipliers is given by the assumed convexity of the cost
functions~$C_t$, and this in its turn is sufficient for the existence
of a pair~$(S^*,\mu^*)$ as in Theorem~\ref{thm:ls}.  We do not give a
formal proof of this assertion here; rather the algorithm given in the
following section constructs such a pair~$(S^*,\mu^*)$ directly.

\section{Algorithm}
\label{sec:algorithm}

We now give an explicit construction of a pair $(S^*,\mu^*)$ as in
Theorem~\ref{thm:ls}, and hence also an algorithm for the
solution of the problem.
This algorithm below may briefly be described as that of attempting to
choose $(S^*,\mu^*)$ so as to satisfy the conditions of
Theorem~\ref{thm:ls}, by choosing the components of these vectors
successively in time and by keeping $\mu^*_t$ as constant as possible
over $t$, changes only being allowed at times when the store is either
empty or full.

For further simplicity, we suppose first that the cost functions $C_t$
are all strictly convex.  For any $t$ such that $1\le t\le T$ and any
(scalar) $\mu$, define $x^*_t(\mu)$ to be the unique value of $x$
which minimises $C_t(x)-\mu x$.
Note that $x^*_t(\mu)$ is then continuous and increasing (though not
necessarily strictly so) in $\mu$.  Define a sequence of times
$0=T_0<T_1<\dots<T_k=T$ and the pair $(S^*,\mu^*)$ inductively as
follows.  Suppose that $i\ge0$ is such that $T_0,\dots,T_i$ together
with $S^*_0,\dots,S^*_{T_i}$ and $\mu^*_1,\dots,\mu^*_{T_i}$, are all
defined. For each (scalar) $\mu$, define a vector
$S(\mu)=(S_1(\mu),\dots,S_T(\mu))$ by
\begin{equation}\label{eq:15}
  S_t(\mu) =
  \begin{cases}
    S^*_t,& \qquad 1\le t\le T_i\\
    S_{t-1}(\mu)+x^*_t(\mu),& \qquad T_i+1\le t\le T.
  \end{cases}
\end{equation}
Define the sets
\begin{displaymath}
  \begin{split}
    M_i = \{\mu:\exists\ T'(\mu) \text{ with $T_i+1\le T'(\mu)\le T$}\\
      \text{such that $0\le S_t(\mu)\le E$ for $T_i+1\le t<T'(\mu)$}\\
    \text{and \emph{either} $S_{T'(\mu)}(\mu)<0$ \emph{or}
      $T'(\mu)=T$, $S_{T}(\mu)<S^*_T$}
    \}.
  \end{split}
\end{displaymath}
and
\begin{displaymath}
  \begin{split}
    M_i' = \{\mu:\exists\ T'(\mu) \text{ with $T_i+1\le T'(\mu)\le T$}\\
      \text{such that $0\le S_t(\mu)\le E$ for $T_i+1\le t<T'(\mu)$}\\
    \text{and \emph{either} $S_{T'(\mu)}(\mu)>E$ \emph{or}
      $T'(\mu)=T$, $S_{T}(\mu)>S^*_T$}
    \}.
  \end{split}
\end{displaymath}
Thus $M_i$ and $M_i'$ are the sets of $\mu$ for which $S(\mu)$
violates one of the capacity constraints and first does so
respectively below or above---in either case at a time which we denote
by $T'(\mu)$.  Note that, since each $x^*_t(\mu)$ is increasing in
$\mu$, we have $\mu<\mu'$ for all $\mu\in M_i$, $\mu'\in M_i'$.  In
particular the sets $M_i$ and $M_i'$ are disjoint.  Note also that
since, for all $t$, we have $x^*_t(\mu)\to-\infty$ as $\mu\downarrow0$,
the set $M_i$ is nonempty.  Let $\bar\mu_i=\sup M_i$.  We now consider
the behaviour of $S(\bar\mu_i)$, for which there are three
possibilities:
\begin{compactenum}[(a)]
\item the vector $S(\bar\mu_i)$ is feasible; in this case we take
  $T_{i+1}=T$ and $S^*_t=S_t(\bar\mu_i)$ with $\mu^*_t=\bar\mu_i$ for
  $T_i+1\le t\le T$ (thus also $S^*_t=S_t(\bar\mu_i)$ for all $t$);
\item the vector $\bar\mu_i$ belongs to the set $M_i$; here there
  necessarily exists at least one $t<T'(\bar\mu_i)$ such that
  $S_t(\bar\mu_i)=E$ (for otherwise, by the continuity of each
  $S_t(\mu)$ in $\mu$, $\mu$ could be increased above $\bar\mu_i$
  while still belonging to the set $M_i$); define $T_{i+1}$ to be any
  such $t$, say the largest, and (again) take $S^*_t=S_t(\bar\mu_i)$
  and $\mu^*_t=\bar\mu_i$ for all $t$ such that $T_i+1\le t\le
  T_{i+1}$; note also that we then have $\bar\mu_i\in M_{i+1}$ so that
  we shall necessarily have $\bar\mu_{i+1}\ge\bar\mu_i$;
\item the vector $\bar\mu_i$ belongs to the set $M_i'$; here,
  similarly to the case (b), there necessarily exists at least one
  $t<T'(\bar\mu_i)$ such that $S_t(\bar\mu_i)=0$; define $T_{i+1}$ to
  be any such $t$, again say the largest, and again take
  $S^*_t=S_t(\bar\mu_i)$ and $\mu^*_t=\bar\mu_i$ for all $t$ such that
  $T_i+1\le t\le T_{i+1}$; further, in this case we have
  $\bar\mu_i\notin M_{i+1}$ so that we shall necessarily have
  $\bar\mu_{i+1}\le\bar\mu_i$.
\end{compactenum}

In the case where the cost functions $C_t$ are all strictly convex, it
now follows immediately from the above construction of the pair
$(S^*,\mu^*)$ that this pair satisfies the conditions (i)--(iii) of
Theorem~\ref{thm:ls}.

In the case where, for at least some $t$, the cost function $C_t$ is
convex, but not necessarily strictly convex, a little extra care is
required.  Here, for such $t$, the function $\mu\rightarrow
x^*_t(\mu)$ is not in general uniquely defined, and, for any given
choice, this function is not in general continuous.  However, in
essence, the above construction of $(S^*,\mu^*)$ continues to
hold---it is simply a matter, where necessary, of choosing the right
value of $x^*_t(\mu)$.


We summarise our results in Theorem~\ref{thm:const} below.

\begin{theorem}
  \label{thm:const}
  The pair $(S^*,\mu^*)$ given by the above recursive construction
  satisfies the conditions (i)--(iii) of Theorem~\ref{thm:ls}.
\end{theorem}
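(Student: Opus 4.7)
The plan is to verify each of the three conditions of Theorem~\ref{thm:ls} in turn for the pair $(S^*,\mu^*)$ produced by the recursion, and then to address the only nontrivial point, namely what happens when the cost functions are convex but not strictly convex. Condition~(ii) is immediate by construction: for $T_i+1\le t\le T_{i+1}$ one has $x_t(S^*)=S^*_t-S^*_{t-1}=x^*_t(\bar\mu_i)=x^*_t(\mu^*_t)$, which by definition minimises $C_t(x)-\mu^*_t x$.

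For condition~(iii), within an epoch $\mu^*_{t+1}=\mu^*_t$ so the first branch of~\eqref{eq:4} holds. At an epoch boundary $t=T_{i+1}$, case~(b) of the construction sets $S^*_{T_{i+1}}=E$ and, because $\bar\mu_i\in M_{i+1}$, forces $\bar\mu_{i+1}\ge\bar\mu_i$, matching the third branch; case~(c) dually gives $S^*_{T_{i+1}}=0$ with $\bar\mu_{i+1}\le\bar\mu_i$, matching the second branch. For condition~(i), within each epoch the capacity constraints $0\le S^*_t\le E$ hold for $T_i+1\le t<T_{i+1}$ by the definition of $T'(\bar\mu_i)$ together with the choice of $T_{i+1}$ strictly less than $T'(\bar\mu_i)$ in cases~(b) and (c), or by direct feasibility in case~(a); the endpoint values $S^*_{T_{i+1}}\in\{0,E\}$ in cases~(b), (c) are obviously feasible. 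The boundary conditions $S_0=S^*_0$ and $S_T=S^*_T$ are built into the construction at its first and last iterations respectively.

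Two supporting observations complete the argument. First, the supremum $\bar\mu_i=\sup M_i$ is attained and finite: finiteness follows because $x^*_t(\mu)\to+\infty$ as $\mu\to\infty$ would force any sufficiently large $\mu$ into $M_i'$, and attainment together with the exhaustiveness of the three cases at $\mu=\bar\mu_i$ follows from the monotone continuity of $\mu\mapsto S(\mu)$ in the strictly convex case. Second, the recursion terminates, since each iteration strictly advances $T_i$ within the finite set $\{0,1,\dots,T\}$.

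The main obstacle is the generalisation to the merely convex case, where $\mu\mapsto x^*_t(\mu)$ is only a monotone correspondence and $S(\mu)$ need not be continuous in $\mu$. Here one must select specific representatives from the relevant subdifferentials, taking one-sided limits at $\bar\mu_i$ so as to preserve the feasibility and monotonicity used above and still land in exactly one of cases~(a)--(c); this is the only point that requires anything beyond the book-keeping outlined above, and it is a delicate but routine matter of resolving the choice of $x^*_t(\bar\mu_i)$ at the finitely many times where $C_t$ is affine on the relevant piece.
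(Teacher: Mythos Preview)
Your verification is correct and follows exactly the approach the paper takes: the paper's own argument is simply that conditions~(i)--(iii) ``follow immediately from the above construction'' in the strictly convex case, together with the remark that in the merely convex case ``it is simply a matter, where necessary, of choosing the right value of $x^*_t(\mu)$''---you have spelled out the book-keeping the paper leaves implicit. One small inaccuracy: with the rate constraints absorbed into $C_t$ as the paper does, $x^*_t(\mu)$ is bounded above by $P_i$ rather than tending to $+\infty$; the finiteness of $\bar\mu_i$ still follows, but via eventual membership in $M_i'$ once the store fills at maximum rate, not via divergence of $x^*_t(\mu)$.
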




The above algorithm requires the determination, at each of the
successive times~$T_i$, $0\le i\le k-1$, of the succeeding time
$T_{i+1}$ and of the common value $\bar\mu_i$ of $\mu^*_t$ for
$T_i+1\le t\le T_{i+1}$.  This is done by looking ahead for the
minimum time horizon necessary for the above determination; the
process then restarts at the time $T_{i+1}$.  A lengthening of the
total time $T$ over which the optimization is to be performed does not
in general change the values of the times $T_i$, but rather simply
creates more of them.  In this sense both the solution to the
problem~$\mathbf{P}$ and the above algorithm are local in time, so
that the solution to $\mathbf{P}$ involves computation which grows
essentially linearly in $T$.  The typical length of the intervals
between the successive times $T_i$ depends on the shape of the cost
functions $C_t$ (notably the difference between buying and selling
prices), together with the rate at which these functions fluctuate in
time.  This is to be expected as the store operates by selling at
prices above those at which it bought, and what is important is the
frequency with which such events can occur.  For example, such
fluctuations may occur in a 24-hour cycle, and, depending on the shape
of the cost functions, the typical length of the intervals between the
successive times $T_i$ may then be of the order of around 12 hours.
These points are illustrated further in the examples of the following
section.

We observe also that, for each time $T_i$ as above, the determination
of the succeeding time $T_{i+1}$ and of $\bar\mu_i$ involves some form
of search over an interval of the real line and as such may typically
only be carried out to a specified degree of precision.  This is
inevitable given general convex cost functions.


\section{The ``small'' store}
\label{sec:example}

In this section we look further at the case of a ``small'' store,
whose operations do not influence the market, and which at time $t$
buys and sells energy at given prices per unit of $c^{(b)}_t$ and
$c^{(s)}_t$ respectively (with $c^{(b)}_t\ge c^{(s)}_t$), so that each
of the functions~$C_t(x)$ is as given by~\eqref{eq:1}.  We continue to
assume the existence of a rate constraint~$P$, which, for mathematical
purposes may, as previously observed, be absorbed into the cost
functions $C_t(x)$ by appropriately modifying them.  We give a number
of results for this case, illustrating them with examples based on
real-world data.

It follows from the results of the previous section that, given an
initial level $S^*_0$ and a final level $S^*_T$ of the store, there
exists a pair $(S^*,\mu^*)$ as in Theorem~\ref{thm:ls} and such that
$S^*$ defines the optimal control of the store over the time interval
$[0,T]$.  One immediate consequence of this is that the optimal
control is here bang-bang is the sense that, at each time $t$, the
store should either buy as much as possible (subject to the capacity
and rate constraints), do nothing, or sell as much as possible,
according to whether the current ``reference value'' $\mu_t$ of $\mu$
is above the buy price~$c^{(b)}_t$, between $c^{(b)}_t$ and the
(lower) sell price~$c^{(s)}_t$, or below $c^{(s)}_t$.

Typically we may have $c^{(s)}_t=\eta c^{(b)}_t$ for some factor
$\eta\le1$ which may be interpreted as representing the efficiency of
the store.  As $\eta$ is decreased below $1$ the set of times at which
buying or selling actually takes place is correspondingly
reduced---see the example below.

Now note that, apart from the obvious scale factor, the solution
$(S^*,\,\mu^*)$ to the optimization problem~$\mathbf{P}$ of problem
depends on capacity constraint~$E$ and the rate constraint~$P$ only
through the ratio $E/P$, which has the dimension of \emph{time}.  As
the store capacity $E$ is increased (with $P$ held fixed), the time
horizon required for the determination of each optimal action becomes
longer and the corresponding optimal solution more global in
character.  For $E=\infty$ there is some scalar $\mu^*$ such that
$\mu^*_t=\mu^*$ for all $t$, so that in an optimal solution, at each
time $t$, the store buys if and only if $c^{(b)}_t\le\mu^*$ and sells
if and only if $c^{(s)}_t\ge\mu^*$.  The scalar $\mu^*$ is such that
the final level of the store is $S^*_T$ as required.  In contrast, as
the rate constraint $P$ is increased (with $E$ held fixed), the time
horizon required for the determination of each optimal action becomes
shorter and the corresponding optimal solution more local in
character.  These results are illustrated in the examples that follow.

We illustrate our methodology with an example storage facility using
parameters motivated by the Dinorwig pumped-storage power station in
Snowdonia, north Wales---see \cite{Din} for a good description of this
power station and its uses. (Note, however, that Dinorwig is not
currently primarily used for price arbitrage, but rather for the
provision of fast response services to the GB energy network.)  We use
the ``small store'' cost structure~\eqref{eq:1}, with $c^{(s)}_t=\eta
c^{(b)}_t$ for all $t$. A typical value for~$\eta$ would be~$0.75$
reflecting the approximate efficiency of the Dinorwig plant.  We
assume also a common input and output rate constraint $P_i=P_o=P$,
say.  The cost series are proportional to the real half-hourly spot
market wholesale electricity prices during the period corresponding to
the example.  As might be expected these prices show a strong daily
cyclical behaviour.  As already observed, for the ``small store''
essentially linear cost structure~\eqref{eq:1}, the optimal control is
bang-bang in the sense already described above.  

In the first of our examples we take the ratio $E/P$ to correspond to
10 half-hourly periods---the total length of time which the Dinorwig
facility takes to either fill or empty.  Specially, we considered the
choice~$E=10$ energy units and~$P=1$ energy unit per half-hour.
Figures~\ref{fig:A-0.65}, \ref{fig:A-0.75} and~\ref{fig:A-0.85} show
the two price series~$c^{(b)}_t$ and~$c^{(s)}_t$ for the 7-day period
Sunday 9 January 2011 to Saturday 15 January 2011 inclusive for
efficiencies of~$\eta=0.65, 0.75$ and~$0.85$, respectively.  The
decisions to buy, sell or keep the level of the store unchanged are
indicated by the red, blue and black line segments, respectively. In the lower
panel we show the series of storage values,~$S_t$, over this one-week
period. Each day storage is emptied when prices are sufficiently high
and filled when prices are low. Notice that as the efficiency,~$\eta$,
is reduced the number of periods at which it is economic to either buy
or sell (as opposed to doing nothing) is similarly reduced.
\begin{figure}[!t]
  \centering
  \includegraphics[width=3.5in]{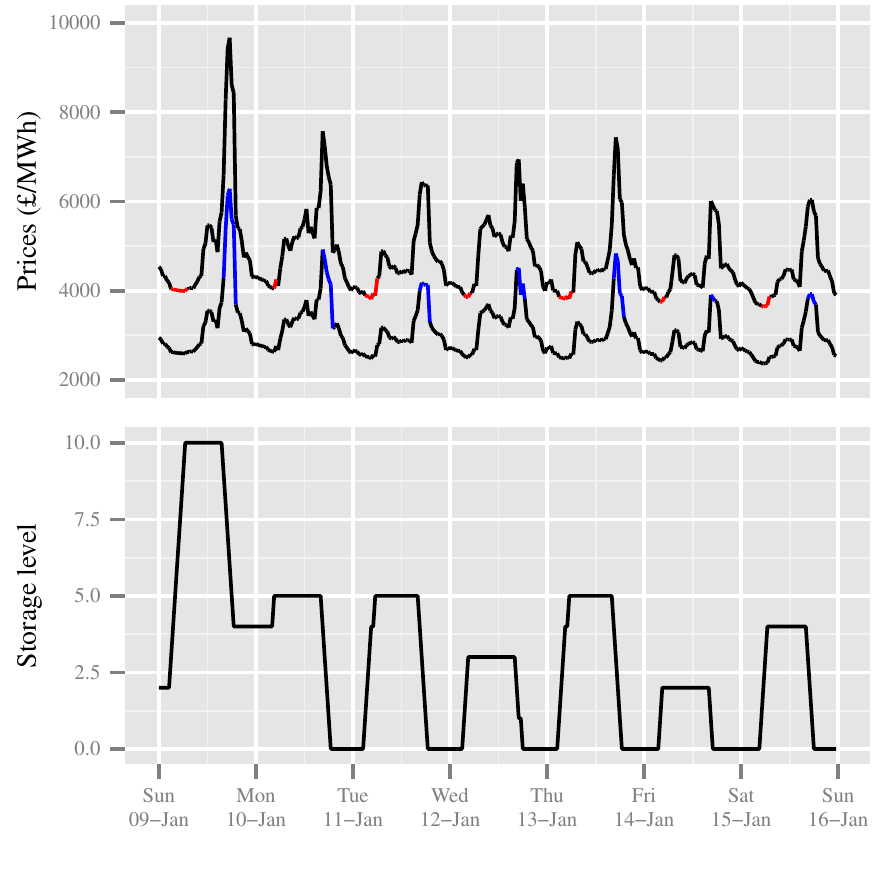}
  \caption{Example in which $E/P$ corresponds to 10 half-hourly
    periods and~$\eta=0.65$: plots of price series with buy and sell times and of
    corresponding level of storage.}
  \label{fig:A-0.65}
\end{figure}
\begin{figure}[!t]
  \centering
  \includegraphics[width=3.5in]{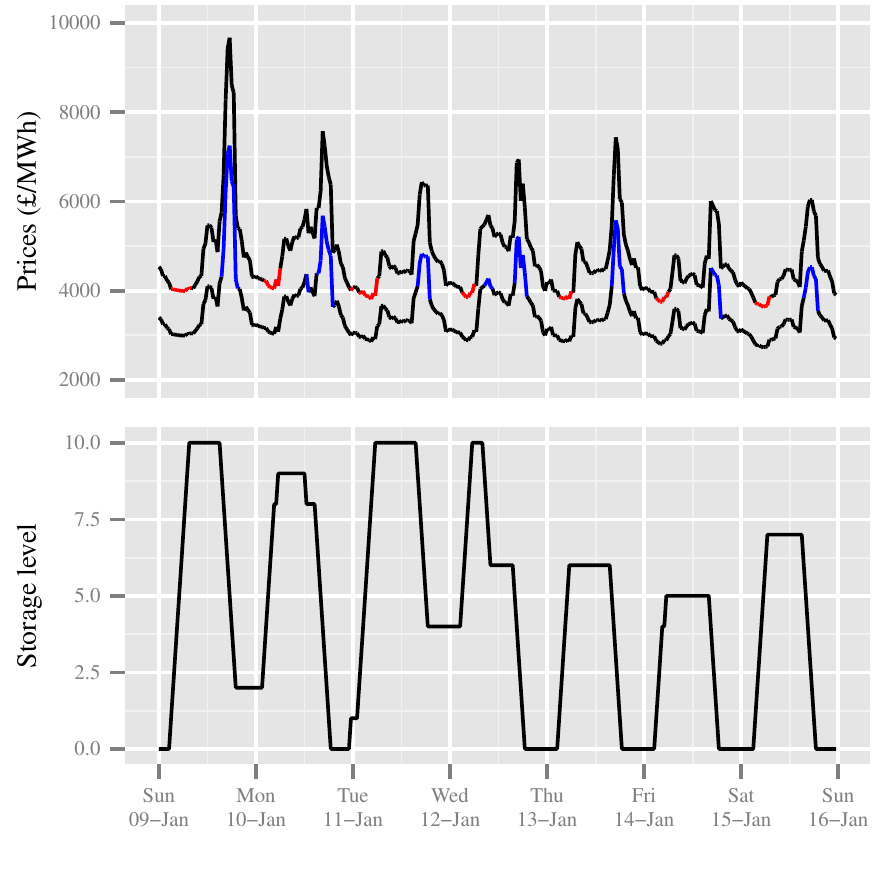}
  \caption{Example in which $E/P$ corresponds to 10 half-hourly
    periods and~$\eta=0.75$: plots of price series with buy and sell times and of
    corresponding level of storage.}
  \label{fig:A-0.75}
\end{figure}
\begin{figure}[!t]
  \centering
  \includegraphics[width=3.5in]{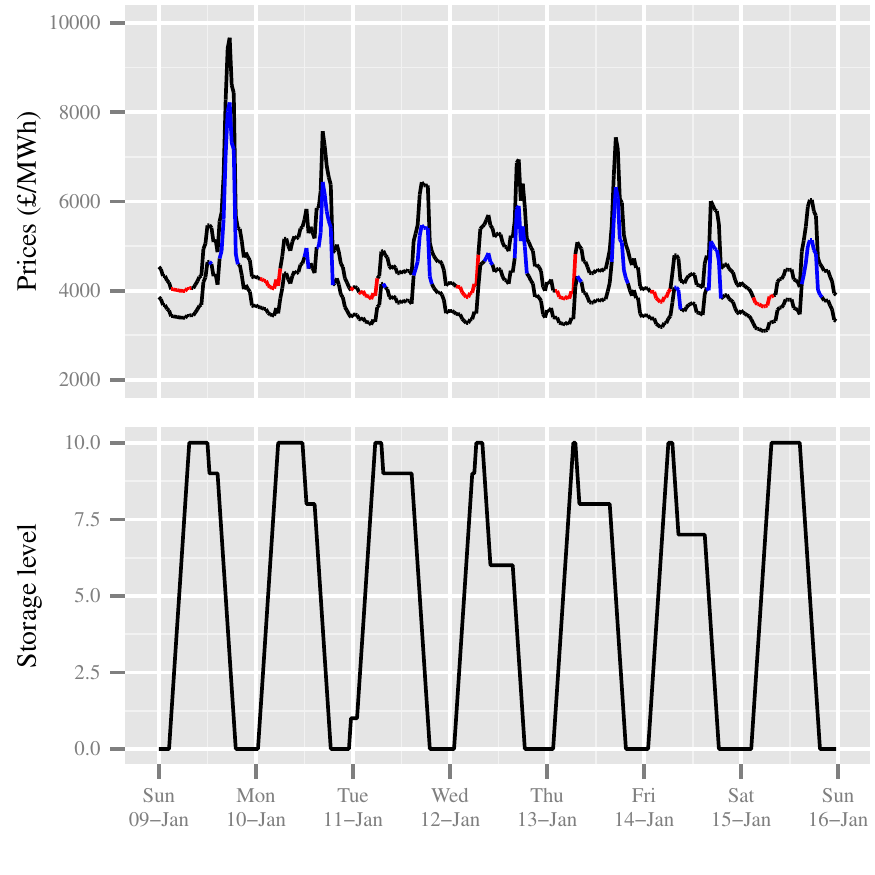}
  \caption{Example in which $E/P$ corresponds to 10 half-hourly
    periods and~$\eta=0.85$: plots of price series with buy and sell times and of
    corresponding level of storage.}
  \label{fig:A-0.85}
\end{figure}

In our second example we investigate the operation of the storage
plant with increasing storage capacity~$E$ while keeping the rate
constraint~$P$ and the two price series as before.
Figure~\ref{fig:B-Elarge} corresponds to the situation that arises
when~$E$ has increased to the extent that the capacity constraint is
no longer active provided only that the initial level~$S^*_0$ and
final level~$S^*_T$ of the store are taken sufficiently large. Here
the storage facility remains nonempty over long periods of time and
may take advantage of the price difference between, for example,
different seasons of the year.

Our third and final example shows in Figure~\ref{fig:B-Plarge} the
complementary circumstance when there is effectively no rate
constraint, that is we hold~$E$ fixed at one energy unit and
increase~$P$ until the rate constraint in no longer
active. Accordingly the finite capacity store is always able to fill
entirely and empty completely within a single half-hour period.
\begin{figure}[!t]
  \centering
  \includegraphics[width=3.5in]{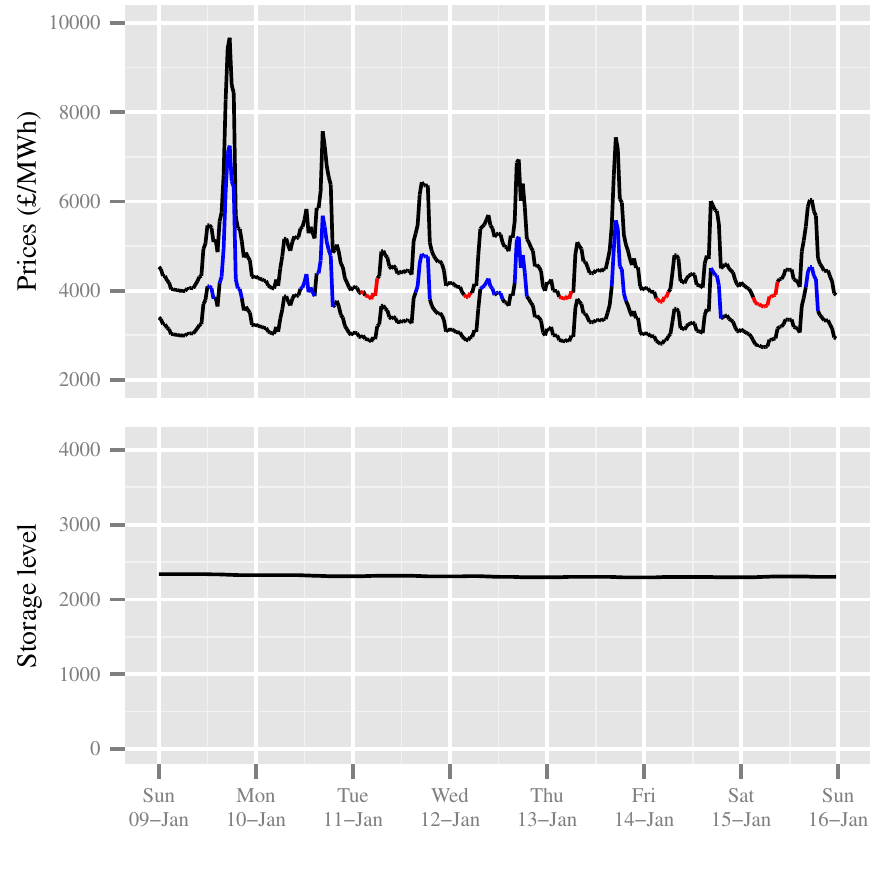}
  \caption{Example in which the capacity constraint~$E$ is no longer
    active (that is,~$E/P$ is large) and~$\eta=0.75$: plots of price
    series with buy and sell times and of corresponding level of
    storage.}
  \label{fig:B-Elarge}
\end{figure}

\begin{figure}[!t]
  \centering
  \includegraphics[width=3.5in]{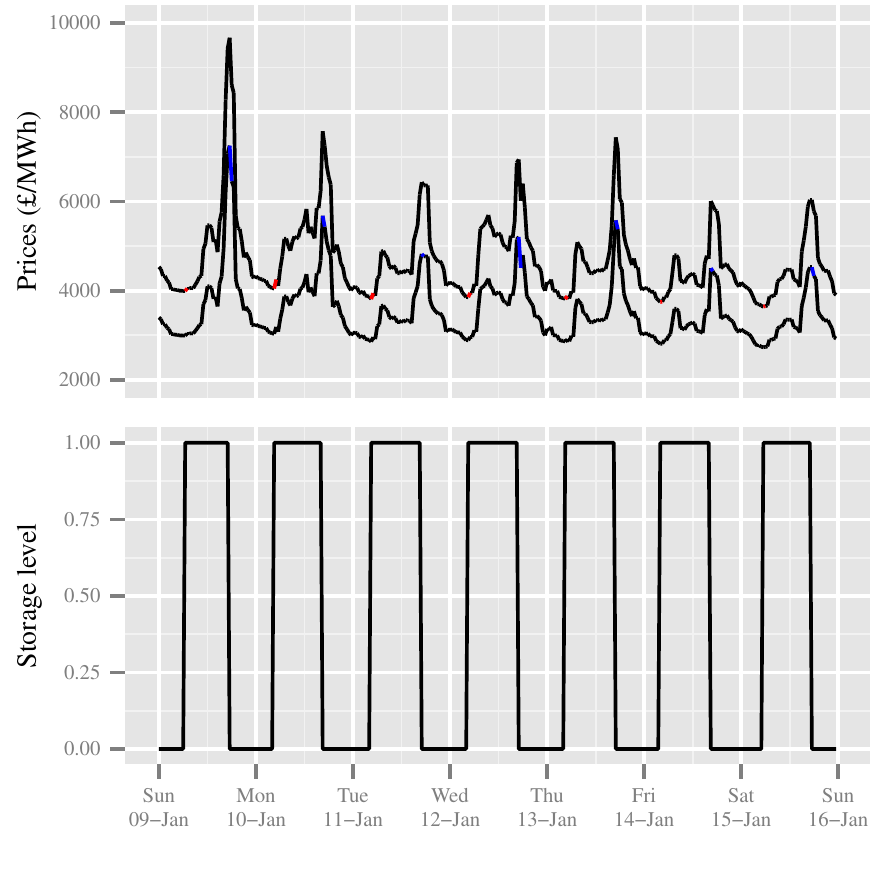}
  \caption{Example in which the rate constraint is no longer active
    (that is,~$E/P$ is small) and~$\eta=0.75$: plots of price series
    with buy and sell times and of corresponding level of storage.}
  \label{fig:B-Plarge}
\end{figure}

\section{Commentary and conclusions}
\label{sec:conclusions}

In the preceding sections we have developed the optimization theory
associated with the use of storage for arbitrage, and given an
algorithm for determining the optimal control policy for, and hence
the value of, storage when used for this purpose.  In particular our
algorithm captures the fact that the control policy is essentially
local in time, in that, for a given system subject to given capacity
and rate constraints, at each time optimal decisions are dependent
only on the relevant cost functions for what is typically a very short
time horizon.

Our model accounts for nonlinear cost functions, rate constraints,
storage inefficiencies, and the effect of externalities caused by the
activities of the store impacting the market.  What we have not done
in the present paper is to consider the use of storage for providing a
reserve in case of unexpected system shocks, such as sudden surges in
demand or shortfalls in supply.  This problem is considered by other
authors, in which the probabilities of storage underflows or overflows
are controlled to fixed levels.  However, we believe that a further
approach here would be to attach economic values to such underflows or
overflows, translating to attaching an economic worth to the absolute
level the store (as opposed to attaching a worth to a \emph{change} in
the level of the store as in the present paper).  Since in practice
storage is used both for arbitrage and for buffering or control as
described above, this would provide a more integrated approach to the
full economic valuation of such storage.

%
\IEEEpeerreviewmaketitle

\section*{Acknowledgements}
\label{sec:acknowledgements}
The authors wish to thank their co-workers Andrei Bejan, Janusz
Bialek, Chris Dent and Frank Kelly for very helpful discussions during
the preliminary part of this work.  They are also most grateful to the
Isaac Newton Institute for Mathematical Sciences in Cambridge for
their funding and hosting of a number of most useful workshops to
discuss this and other mathematical problems arising in particular in
the consideration of the management of complex energy systems.  They
are further grateful to National Grid plc for additional discussion
and the provision of data, and finally to the Engineering and Physical
Sciences Research Council for the support of the research programme
under which the present research is carried out. (The EPSRC grant
references are as follows: EP/I017054/1 and EP/I016023/1.)



%

\end{document}